\title[Approaching the UCT problem via crossed products]{Approaching the UCT problem via crossed products of the Razak--Jacelon algebra}
\author{Sel\c{c}uk Barlak}
\author{G{\'a}bor Szab{\'o}}
\address{Europa-Universit\"at Flensburg,
Institut f\"ur mathematische, 
naturwissen\-schaftliche und technische Bildung,
Abteilung f\"ur Mathematik und ihre Didaktik,
Auf dem Campus 1b,
DE-24943 Flensburg,
Germany}
\email{selcuk.barlak@uni-flensburg.de}
\address{Department of Mathematics, KU Leuven, Celestijnenlaan 200b, box 2400, B-3001 Leuven, Belgium}
\email{gabor.szabo@kuleuven.be}
\thanks{The first author was supported by the Villum Fonden project grant 'Local and global structures of groups and their algebras' (2014-2018)}
\thanks{The second author was supported by EPSRC grant EP/N00874X/1, the Danish National Research Foundation through the Centre for Symmetry and Deformation (DNRF92), and the European Union's Horizon 2020 research and innovation programme under the Marie Sklodowska-Curie grant agreement 746272}
\subjclass[2010]{Primary 46L35, 46L55}
\begin{document}

\renewcommand\matrix[1]{\left(\begin{array}{*{10}{c}} #1 \end{array}\right)}  
\newcommand\set[1]{\left\{#1\right\}}  
\newcommand\mset[1]{\left\{\!\!\left\{#1\right\}\!\!\right\}}

\newcommand{\IA}[0]{\mathbb{A}} \newcommand{\IB}[0]{\mathbb{B}}
\newcommand{\IC}[0]{\mathbb{C}} \newcommand{\ID}[0]{\mathbb{D}}
\newcommand{\IE}[0]{\mathbb{E}} \newcommand{\IF}[0]{\mathbb{F}}
\newcommand{\IG}[0]{\mathbb{G}} \newcommand{\IH}[0]{\mathbb{H}}
\newcommand{\II}[0]{\mathbb{I}} \renewcommand{\IJ}[0]{\mathbb{J}}
\newcommand{\IK}[0]{\mathbb{K}} \newcommand{\IL}[0]{\mathbb{L}}
\newcommand{\IM}[0]{\mathbb{M}} \newcommand{\IN}[0]{\mathbb{N}}
\newcommand{\IO}[0]{\mathbb{O}} \newcommand{\IP}[0]{\mathbb{P}}
\newcommand{\IQ}[0]{\mathbb{Q}} \newcommand{\IR}[0]{\mathbb{R}}
\newcommand{\IS}[0]{\mathbb{S}} \newcommand{\IT}[0]{\mathbb{T}}
\newcommand{\IU}[0]{\mathbb{U}} \newcommand{\IV}[0]{\mathbb{V}}
\newcommand{\IW}[0]{\mathbb{W}} \newcommand{\IX}[0]{\mathbb{X}}
\newcommand{\IY}[0]{\mathbb{Y}} \newcommand{\IZ}[0]{\mathbb{Z}}

\newcommand{\CA}[0]{\mathcal{A}} \newcommand{\CB}[0]{\mathcal{B}}
\newcommand{\CC}[0]{\mathcal{C}} \newcommand{\CD}[0]{\mathcal{D}}
\newcommand{\CE}[0]{\mathcal{E}} \newcommand{\CF}[0]{\mathcal{F}}
\newcommand{\CG}[0]{\mathcal{G}} \newcommand{\CH}[0]{\mathcal{H}}
\newcommand{\CI}[0]{\mathcal{I}} \newcommand{\CJ}[0]{\mathcal{J}}
\newcommand{\CK}[0]{\mathcal{K}} \newcommand{\CL}[0]{\mathcal{L}}
\newcommand{\CM}[0]{\mathcal{M}} \newcommand{\CN}[0]{\mathcal{N}}
\newcommand{\CO}[0]{\mathcal{O}} \newcommand{\CP}[0]{\mathcal{P}}
\newcommand{\CQ}[0]{\mathcal{Q}} \newcommand{\CR}[0]{\mathcal{R}}
\newcommand{\CS}[0]{\mathcal{S}} \newcommand{\CT}[0]{\mathcal{T}}
\newcommand{\CU}[0]{\mathcal{U}} \newcommand{\CV}[0]{\mathcal{V}}
\newcommand{\CW}[0]{\mathcal{W}} \newcommand{\CX}[0]{\mathcal{X}}
\newcommand{\CY}[0]{\mathcal{Y}} \newcommand{\CZ}[0]{\mathcal{Z}}

\newcommand{\FA}[0]{\mathfrak{A}} \newcommand{\FB}[0]{\mathfrak{B}}
\newcommand{\FC}[0]{\mathfrak{C}} \newcommand{\FD}[0]{\mathfrak{D}}
\newcommand{\FE}[0]{\mathfrak{E}} \newcommand{\FF}[0]{\mathfrak{F}}
\newcommand{\FG}[0]{\mathfrak{G}} \newcommand{\FH}[0]{\mathfrak{H}}
\newcommand{\FI}[0]{\mathfrak{I}} \newcommand{\FJ}[0]{\mathfrak{J}}
\newcommand{\FK}[0]{\mathfrak{K}} \newcommand{\FL}[0]{\mathfrak{L}}
\newcommand{\FM}[0]{\mathfrak{M}} \newcommand{\FN}[0]{\mathfrak{N}}
\newcommand{\FO}[0]{\mathfrak{O}} \newcommand{\FP}[0]{\mathfrak{P}}
\newcommand{\FQ}[0]{\mathfrak{Q}} \newcommand{\FR}[0]{\mathfrak{R}}
\newcommand{\FS}[0]{\mathfrak{S}} \newcommand{\FT}[0]{\mathfrak{T}}
\newcommand{\FU}[0]{\mathfrak{U}} \newcommand{\FV}[0]{\mathfrak{V}}
\newcommand{\FW}[0]{\mathfrak{W}} \newcommand{\FX}[0]{\mathfrak{X}}
\newcommand{\FY}[0]{\mathfrak{Y}} \newcommand{\FZ}[0]{\mathfrak{Z}}

\newcommand{\Fa}[0]{\mathfrak{a}} \newcommand{\Fb}[0]{\mathfrak{b}}
\newcommand{\Fc}[0]{\mathfrak{c}} \newcommand{\Fd}[0]{\mathfrak{d}}
\newcommand{\Fe}[0]{\mathfrak{e}} \newcommand{\Ff}[0]{\mathfrak{f}}
\newcommand{\Fg}[0]{\mathfrak{g}} \newcommand{\Fh}[0]{\mathfrak{h}}
\newcommand{\Fi}[0]{\mathfrak{i}} \newcommand{\Fj}[0]{\mathfrak{j}}
\newcommand{\Fk}[0]{\mathfrak{k}} \newcommand{\Fl}[0]{\mathfrak{l}}
\newcommand{\Fm}[0]{\mathfrak{m}} \newcommand{\Fn}[0]{\mathfrak{n}}
\newcommand{\Fo}[0]{\mathfrak{o}} \newcommand{\Fp}[0]{\mathfrak{p}}
\newcommand{\Fq}[0]{\mathfrak{q}} \newcommand{\Fr}[0]{\mathfrak{r}}
\newcommand{\Fs}[0]{\mathfrak{s}} \newcommand{\Ft}[0]{\mathfrak{t}}
\newcommand{\Fu}[0]{\mathfrak{u}} \newcommand{\Fv}[0]{\mathfrak{v}}
\newcommand{\Fw}[0]{\mathfrak{w}} \newcommand{\Fx}[0]{\mathfrak{x}}
\newcommand{\Fy}[0]{\mathfrak{y}} \newcommand{\Fz}[0]{\mathfrak{z}}

\newcommand{\Ra}[0]{\Rightarrow}
\newcommand{\La}[0]{\Leftarrow}
\newcommand{\LRa}[0]{\Leftrightarrow}

\renewcommand{\phi}[0]{\varphi}
\newcommand{\eps}[0]{\varepsilon}

\newcommand{\quer}[0]{\overline}
\newcommand{\uber}[0]{\choose}
\newcommand{\ord}[0]{\operatorname{ord}}	
\newcommand{\GL}[0]{\operatorname{GL}}
\newcommand{\supp}[0]{\operatorname{supp}}	
\newcommand{\id}[0]{\operatorname{id}}		
\newcommand{\Sp}[0]{\operatorname{Sp}}		
\newcommand{\eins}[0]{\mathbf{1}}			
\newcommand{\diag}[0]{\operatorname{diag}}
\newcommand{\ind}[0]{\operatorname{ind}}
\newcommand{\auf}[1]{\quad\stackrel{#1}{\longrightarrow}\quad}
\newcommand{\hull}[0]{\operatorname{hull}}
\newcommand{\prim}[0]{\operatorname{Prim}}
\newcommand{\ad}[0]{\operatorname{Ad}}
\newcommand{\quot}[0]{\operatorname{Quot}}
\newcommand{\ext}[0]{\operatorname{Ext}}
\newcommand{\ev}[0]{\operatorname{ev}}
\newcommand{\fin}[0]{{\subset\!\!\!\subset}}
\newcommand{\diam}[0]{\operatorname{diam}}
\newcommand{\Hom}[0]{\operatorname{Hom}}
\newcommand{\Aut}[0]{\operatorname{Aut}}
\newcommand{\Ext}[0]{\operatorname{Ext}}
\newcommand{\del}[0]{\partial}
\newcommand{\dimeins}[0]{\dim^{\!+1}}
\newcommand{\dimnuc}[0]{\dim_{\mathrm{nuc}}}
\newcommand{\dimnuceins}[0]{\dimnuc^{\!+1}}
\newcommand{\dr}[0]{\operatorname{dr}}
\newcommand{\dimrok}[0]{\dim_{\mathrm{Rok}}}
\newcommand{\dimrokeins}[0]{\dimrok^{\!+1}}
\newcommand{\dreins}[0]{\dr^{\!+1}}
\newcommand*\onto{\ensuremath{\joinrel\relbar\joinrel\twoheadrightarrow}} 
\newcommand*\into{\ensuremath{\lhook\joinrel\relbar\joinrel\rightarrow}}  
\newcommand{\im}[0]{\operatorname{im}}
\newcommand{\dst}[0]{\displaystyle}
\newcommand{\cstar}[0]{\ensuremath{\mathrm{C}^*}}
\newcommand{\ann}[0]{\operatorname{Ann}}
\newcommand{\dist}[0]{\operatorname{dist}}
\newcommand{\idlat}[0]{\operatorname{IdLat}}
\newcommand{\Cu}[0]{\operatorname{Cu}}
\newcommand{\Ost}[0]{\CO_\infty^{\mathrm{st}}}
\newcommand{\linhull}{\operatorname{span}}
\newcommand{\ue}{{\ensuremath{\approx_{\mathrm{u}}}}}

\newtheorem{satz}{Satz}[section]		
\newtheorem{cor}[satz]{Corollary}
\newtheorem{lemma}[satz]{Lemma}
\newtheorem{prop}[satz]{Proposition}
\newtheorem{theorem}[satz]{Theorem}
\newtheorem*{theoremoz}{Theorem}

\theoremstyle{definition}
\newtheorem{defi}[satz]{Definition}
\newtheorem*{defioz}{Definition}
\newtheorem{defprop}[satz]{Definition \& Proposition}
\newtheorem{nota}[satz]{Notation}
\newtheorem*{notaoz}{Notation}
\newtheorem{rem}[satz]{Remark}
\newtheorem*{remoz}{Remark}
\newtheorem{example}[satz]{Example}
\newtheorem{defnot}[satz]{Definition \& Notation}
\newtheorem{question}[satz]{Question}
\newtheorem*{questionoz}{Question}
\newtheorem{construction}[satz]{Construction}
\newtheorem{conjecture}[satz]{Conjecture}


\begin{abstract}
We show that the UCT problem for separable, nuclear \cstar-algebras relies only on whether the UCT holds for crossed products of certain finite cyclic group actions on the Razak--Jacelon algebra.
This observation is analogous to and in fact recovers a characterization of the UCT problem in terms of finite group actions on the Cuntz algebra $\CO_2$ established in previous work by the authors.
Although based on a similar approach, the new conceptual ingredients in the finite context are the recent advances in the classification of stably projectionless \cstar-algebras, as well as a known characterization of the UCT problem in terms of certain tracially AF \cstar-algebras due to Dadarlat. 
\end{abstract}

\maketitle


\section*{Introduction}

A separable \cstar-algebra $A$ is said to satisfy Rosenberg--Schochet's universal coefficient theorem (UCT) if for every separable C*-algebra $A'$, the following sequence is exact
\[
\xymatrix{
\Ext(K_*(A),K_{*-1}(A')) \ar@{^(->}[r] & KK_*(A,A') \ar@{->>}[r] & \Hom(K_*(A),K_*(A')),
}
\]
where the right hand map is the natural one and the left hand map is supposed to be the inverse of a map that is always defined; see \cite{RosenbergSchochet87}. 
A separable \cstar-algebra satisfies the UCT if and only if it is $KK$-equivalent to a commutative \cstar-algebra. 
Furthermore, the class of separable, nuclear \cstar-algebras satisfying the UCT can be characterized as the smallest class of separable, nuclear \cstar-algebras that contains $\IC$ and is closed under countable inductive limits, the two out of three property for extensions, and $KK$-equivalences; see \cite{BlaKK}. 
Whereas Skandalis \cite{Skandalis88} has shown that there exist non-examples within the class of separable, exact, non-nuclear \cstar-algebras, it is still open whether all separable, nuclear \cstar-algebras satisfy the UCT. 
This arguably constitutes the most important open question about separable, nuclear \cstar-algebras and is commonly referred to as the UCT problem.
Due to the recent dramatic progress in the structure and classification theory of simple, nuclear \cstar-algebras achieved by many hands --- see among others \cite{MatuiSato12, MatuiSato14UHF, GongLinNiu15, ElliottGongLinNiu15, TikuisisWhiteWinter17} --- the UCT problem is receiving an increasing amount of attention.

Over the years, different characterizations and reductions of the UCT problem have emerged. 
Kirchberg proved in \cite{KirchbergC} (and later \cite{Kirchberg04} in published form) that every separable, nuclear \cstar-algebra is $KK$-equivalent to some unital Kirchberg algebra, thus localizing the problem at the class of unital Kirchberg algebras. 
In fact, he showed that it is sufficient to only consider those Kirchberg algebras with vanishing $K$-theory, which by the Kirchberg--Phillips classification theory \cite{KirchbergC, Phillips00} then asks whether such \cstar-algebras must be isomorphic to the Cuntz algebra $\CO_2$; see \cite{Cuntz77,Cuntz81}.
Furthermore, by using either the work of Spielberg~\cite{Spielberg07_2} or combining results of Katsura \cite{Katsura08_2} and Yeend \cite{Yeend06,Yeend07}, one sees that UCT Kirchberg algebras have specific groupoid models.
These in turn give rise to Cartan subalgebras in the sense of Renault \cite{Renault08}. 
Conversely, it was recently recently shown in \cite{BarlakLi17} that any separable, nuclear \cstar-algebra with a Cartan subalgebra satisfies the UCT. 
As a consequence, the UCT problem turns out to be equivalent to the question whether every unital Kirchberg algebra has a Cartan subalgebra.

Using Kirchberg's aforementioned insight, the authors reduced the UCT problem in \cite{BarlakSzabo17} to crossed products of $\CO_2$ by certain actions of finite cyclic groups.
In addition to Kirchberg's previously known reduction theorem, the other two crucial ingredients of the proof were the Kirchberg--Phillips $\CO_2$-absor\-ption theorem \cite{KirchbergPhillips00} and the existence of certain model actions of $\IZ_p$ on $\CO_2$ for any prime number $p$.
The feature of any of these models is that the associated crossed product has, in a sense, the smallest possible non-trivial $K$-theory, namely that given by the $(p-1)$-fold direct sum of $M_{p^\infty}$; see also \cite{Izumi04, Izumi04II}\footnote{In these references, one sees that the possible $K$-groups for crossed products of $\CO_2$ by approximately representable $\IZ_p$-actions are precisely the modules over the ring $\IZ[\nicefrac 1p,e^{\nicefrac{2\pi i}{p}}]$. The additive group of this ring is in turn isomorphic to $\IZ[\nicefrac 1p]^{\oplus p-1}\cong K_0\big( M_{p^\infty}^{\oplus p-1} \big)$.}. 
The crossed product viewpoint has recently been taken up in \cite{BarlakLi17, BarlakLi17_2} to characterize the UCT problem in terms of the existence of invariant Cartan subalgebras for certain finite order automorphisms of $\CO_2$. 
We would also like to remark that in a similar fashion as for finite cyclic groups, a variant of the proof of \cite[23.15.12]{BlaKK} shows that the UCT problem can be reduced to crossed products of $\CO_2$ by certain actions of the circle group $\IT$ as well.

Building on Lin's classification of tracially AF \cstar-algebras \cite{Lin01, Lin04}, Dadarlat achieved in \cite{Dadarlat03} a reduction of the UCT problem to stably finite, nuclear \cstar-algebras, which is similar in spirit to Kirchberg's reduction theorem. 
He showed that the UCT problem has an affirmative answer if and only if the universal UHF algebra is the only separable, unital, simple, nuclear, tracially AF \cstar-algebra whose ordered $K$-theory is isomorphic to $(\IQ,0)$. 
As such \cstar-algebras are automatically monotracial, this in particular reduces the UCT problem to the class of separable, unital, simple, nuclear \cstar-algebras with a unique tracial state. 
Due to work of Sato--White--Winter \cite{SatoWhiteWinter15} and the recent classification of $KK$-contractible \cstar-algebras with finite nuclear dimension \cite{ElliottGongLinNiu17}, one may obtain a monotracial\footnote{Although we will not need it, we point out that using more advanced results around the Toms--Winter conjecture like \cite{BBSTWW} yields more general tracial versions in the same fashion.} analog of the Kirchberg--Phillips absorption theorem with the Razak--Jacelon algebra $\CW$ in place of $\CO_2$; see \cite{Razak02, Jacelon13} and Theorem \ref{thm:Robert}.

In this short note, we reduce the UCT problem to crossed products of $\CW$ by certain actions of finite cyclic groups. 
Similarly as in \cite{BarlakSzabo17}, we construct a model action of $\IZ_p$ on $\CW$ for each prime number $p$ such that the crossed product is in Robert's classifiable class \cite{Robert12}, and which is moreover monotracial and $KK$-equivalent to the $(p-1)$-fold direct sum of the UHF algebra $M_{p^\infty}$.  
The action itself arises as the dual action of another action constructed similarly as in \cite[Example 4.12]{BarlakSzabo17}, which we will carry out in detail for the reader's convenience. 
Once the model action is taken care of, our main result (Theorem~\ref{thm:main-result}) is then proved using the aforementioned $\CW$-absorption theorem for monotracial \cstar-algebras and Dadarlat's reduction theorem from \cite{Dadarlat03}.
Near the end of the paper, we will argue why our previous characterization of the UCT problem in terms of crossed products of $\CO_2$ can be directly recovered from the main result of this note, thus showing that we achieve a reduction that is a priori stronger.
It remains open whether the UCT problem can be further characterized in terms of the existence of certain Cartan subalgebras in $\CW$, akin to a similar phenomenon studied in \cite{BarlakLi17, BarlakLi17_2} for $\CO_2$.
This direction of research may be the subject of subsequent work.\bigskip

\textbf{Acknowledgement.}
Substantial parts of this work were carried out during research visits of the first author to the University of Aberdeen in July 2017 and to the University of Copenhagen in November 2017, respectively. 
He is grateful to these institutions for their hospitality and support.


\section{The model actions}

Throughout the paper, we will assume familiarity with $K$-theory and Kasparov's bivariant $KK$-theory; see \cite{BlaKK} for an introduction.
For standard references treating the Rokhlin property for finite group actions, see \cite{Izumi04, Izumi04II, Nawata16, Santiago15, GardellaSantiago16}.

\begin{defi}
Let $G$ be a finite group and $A$ a separable \cstar-algebra.
An action $\alpha: G\curvearrowright A$ is said to have the Rokhlin property, if there exists an approximately central sequence $e_n\in A$ of positive contractions satisfying
\[
\|(e_n-e_n^2)a\| + \Big\|\Big(\eins-\sum_{g\in G} \alpha_g(e_n) \Big)a \Big\| \ \stackrel{n\to\infty}{\longrightarrow} \ 0
\]
for all $a\in A$.
\end{defi}

\begin{rem}
Let $A$ be a finite \cstar-algebra, which has at least one tracial state but no unbounded traces.
Recall that an automorphism $\alpha$ on $A$ is called strongly outer, if it is outer, and for every $\alpha$-invariant tracial state $\tau$ on $A$, the induced automorphism of $\alpha$ on the von Neumann algebra $\pi_\tau(A)''$ is outer.
We will call an action $\alpha: G\curvearrowright A$ of a discrete group strongly outer, if $\alpha_g$ is a strongly outer automorphism for all $g\neq 1_G$.
Actions with the Rokhlin property are particular examples of strongly outer actions.
\end{rem}

\begin{defi}
Let $G$ be a finite abelian group and $A$ a separable \cstar-algebra.
An action $\alpha: G\curvearrowright A$ is said to be approximately representable, if there exist sequences of contractions $x_{g,n}\in A^\alpha$ in the fixed point algebra, for $g\in G$, such that the following properties hold for all $g,h\in G$ and $a\in A$:
\begin{itemize}
\item $(x_{1_G,n})_n$ is an approximate unit in $A$;
\item $\|a(x_{g,n}x_{h,n}-x_{gh,n})\|+\|(x_{g,n}x_{h,n}-x_{gh,n})a\|  \stackrel{n\to\infty}{\longrightarrow}  0$;
\item $\|a(x_{g,n}^*-x_{g^{-1},n})\|+\|(x_{g,n}^*-x_{g^{-1},n})a\|\stackrel{n\to\infty}{\longrightarrow} 0$;
\item $\|\alpha_g(a)-x_{g,n}ax_{g,n}^*\|\stackrel{n\to\infty}{\longrightarrow} 0$.
\end{itemize}
\end{defi}

\begin{rem} \label{rem:robert-class}
For what follows, we call Robert's class $\FC_R$ the class of separable \cstar-algebras that are stably isomorphic to inductive limits of 1-NCCW complexes with trivial $K_1$-groups on the level of building blocks.
We will make use of Robert's paper \cite{Robert12} where a classification theory for \cstar-algebras in $\FC_R$ is developed in terms of the functor $\Cu^\sim$ defined there.
If one restrict to the simple \cstar-algebras $A$ in $\FC_R$, then it is shown in \cite[Section 6]{Robert12} that $\Cu^\sim(A)$ is naturally isomorphic to $K_0(A)\sqcup\operatorname{LAff}^\sim_+(T_0(A))$.
Let us denote by $\FC_R^0$ the simple, stably projectionless \cstar-algebras in Robert's class with trivial pairing between the $K_0$-group and the traces.

In more recent work \cite{GongLin17} by Gong--Lin, specific models for simple, stably projectionless \cstar-algebras with trivial pairing maps are constructed.
In particular, one sees that for every pair $(G_0, T)$ of a torsion-free abelian group $G_0$ and Choquet simplex $T$, there exists a \cstar-algebra $A\in\FC_R^0$ with continuous scale and $(K_0(A), T(A))\cong (G_0, T)$; see \cite[Section 6]{GongLin17}.\footnote{The continuous scale assumption can be avoided if one replaces the Choquet simplex by a topological cone with a suitable scale.
However we do not need this level of generality for our applications.}

Moreover, it follows from Robert's aforementioned results that for every $A\in\FC_R^0$, and every group endomorphism $\kappa$ on $K_0(A)$, there exists a trace-preserving endomorphism $\beta$ on $A$ with $K_0(\beta)=\kappa$, and $\beta$ is unique up to approximate unitary equivalence.
By the usual Elliott intertwining method \cite[Corollary 2.3.4]{Rordam}, $\beta$ may be chosen to be an automorphism if $\kappa$ is an automorphism.
We will use this fact in the proof of the proposition below:
\end{rem}

The reader should now recall the Razak--Jacelon algebra $\CW$ from \cite{Razak02, Jacelon13}.

\begin{prop}[cf.\ {\cite[Example 4.12]{BarlakSzabo17}}] \label{prop:prime-example}
Let $p\geq 2$ be a prime number.
Then there exists a strongly outer, approximately representable action $\gamma: \IZ_p\curvearrowright\CW$ such that $\CW\rtimes_\gamma\IZ_p \sim_{KK} M_{p^\infty}^{\oplus p-1}$.
\end{prop}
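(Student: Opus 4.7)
The plan is to construct $\gamma$ as the Takai dual of a Rokhlin action $\alpha \colon \IZ_p \curvearrowright B$, where $B \in \FC_R^0$ is monotracial and stably projectionless with $K_0(B) \cong R := \IZ[\nicefrac 1p, \zeta_p]$, chosen so that $\alpha_*$ on $K_0$ is multiplication by the primitive $p$-th root of unity $\zeta_p$ and $B \rtimes_\alpha \IZ_p \cong \CW$. Once this is arranged, Takai duality gives $\CW \rtimes_\gamma \IZ_p \cong B \otimes M_p$, which is stably isomorphic to $B$; since $B \in \FC_R$ satisfies the UCT with $(K_0(B), K_1(B)) \cong (\IZ[\nicefrac 1p]^{\oplus p-1}, 0)$, it is $KK$-equivalent to $M_{p^\infty}^{\oplus p-1}$, giving the claim.

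The existence of $B$ with the prescribed Elliott invariant is provided by the Gong--Lin existence results \cite{GongLin17}. Multiplication by $\zeta_p$ defines an order-$p$ automorphism $\kappa$ of $K_0(B) = R$ which automatically preserves the trivial pairing with traces, so by Remark \ref{rem:robert-class} it lifts to a trace-preserving $\beta \in \Aut(B)$ with $K_0(\beta) = \kappa$, unique up to approximate unitary equivalence. Mimicking \cite[Example 4.12]{BarlakSzabo17}, one promotes $\beta$ to a genuine order-$p$ action $\alpha$ with the Rokhlin property by realizing $(B, \alpha)$ as a $\IZ_p$-equivariant inductive limit of $1$-NCCW building blocks with Rokhlin towers arranged at each stage. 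This is the technical heart of the argument, where one intertwines Elliott-type approximate unitary equivalences with the finite-order and Rokhlin constraints.

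The identification $B \rtimes_\alpha \IZ_p \cong \CW$ then rests on two points. First, $KK$-contractibility: since $\prod_{i=1}^{p-1}(1 - \zeta_p^i) = p$ is a unit in $R$, the element $1 - \zeta_p$ itself is a unit in $R$; consequently both the fixed points $R^{\IZ_p}$ and coinvariants $R_{\IZ_p}$ vanish, and because $p \in R^\times$, all higher group (co)homology of $\IZ_p$ with coefficients in $R$ vanishes by the standard transfer argument. A $K$-theoretic computation combining this vanishing with the Rokhlin property (via the spectral sequence computing $K_*$ of the crossed product from the equivariant $K$-theory $K_*^{\IZ_p}(B)$) yields $K_*(B \rtimes_\alpha \IZ_p) = 0$. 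Second, the crossed product inherits from $B$, by virtue of the Rokhlin property, being simple, nuclear, monotracial, stably projectionless, and of finite nuclear dimension. The classification theorem of Elliott--Gong--Lin--Niu \cite{ElliottGongLinNiu17} for $KK$-contractible monotracial \cstar-algebras with finite nuclear dimension then identifies $B \rtimes_\alpha \IZ_p$ as $\CW$.

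Setting $\gamma := \hat\alpha$ under $\widehat{\IZ_p} \cong \IZ_p$, approximate representability of $\gamma$ is the standard duality counterpart of the Rokhlin property for finite abelian group actions, and strong outerness of $\gamma$ follows from simplicity and uniqueness of trace of $\CW \rtimes_\gamma \IZ_p \cong B \otimes M_p$ (combined with $\gamma$-invariance of the unique trace of $\CW$ inherited from the duality construction). The principal obstacle is the construction in the second paragraph: producing a genuine $\IZ_p$-action on $B$ realizing the desired $K_0$-data and equipped with the Rokhlin property, rather than merely an automorphism up to approximate unitary equivalence.
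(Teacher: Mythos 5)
Your proposal follows essentially the same route as the paper: choose a monotracial $B\in\FC_R^0$ with $K_0(B)\cong\IZ[\nicefrac1p,\xi_p]$ via \cite{GongLin17}, lift multiplication by $\xi_p$ to $\beta\in\Aut(B)$ by Robert's classification, promote $\beta$ to a Rokhlin action $\alpha:\IZ_p\curvearrowright B$ with $B\rtimes_\alpha\IZ_p\cong\CW$, and take $\gamma=\hat\alpha$, using Takai duality and $B\sim_{KK}M_{p^\infty}^{\oplus p-1}$ for the final claim. Three points of divergence in the details, however. First, the step you flag as the principal obstacle is resolved in the paper by an explicit, elementary construction rather than any intertwining argument: $B$ is rewritten as the limit of $B_n^{\oplus p}$ (with $B_n=M_{p^{n-1}}\otimes B$) along $\beta$-twisted diagonal connecting maps, on which the cyclic shift manifestly defines a Rokhlin action commuting with the connecting maps; alternatively, since $B\cong B\otimes M_{p^\infty}$, one may simply invoke \cite[Theorem 2.3]{BarlakSzabo17} to get $\alpha$ with $\alpha_1\ue\beta$. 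Second, your group-cohomology/spectral-sequence mechanism for $K_*(B\rtimes_\alpha\IZ_p)=0$ is both unnecessary and unreliable as stated --- no such spectral sequence computes $K_*$ of crossed products by arbitrary $\IZ_p$-actions from $K_*(B)$; under the Rokhlin property the standard route is that $B\rtimes_\alpha\IZ_p$ is Morita equivalent to $B^\alpha$ and $K_*(B^\alpha)\cong\{x\in K_*(B): K_*(\alpha_1)(x)=x\}$ (see \cite[Theorem 4.9]{BarlakSzabo16ss}), which vanishes by exactly your key algebraic observation that $1-\xi_p$ is a unit in $\IZ[\nicefrac1p,\xi_p]$.

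The one genuine gap is in your classification step: to apply \cite{ElliottGongLinNiu17} you need $KK$-contractibility of $B\rtimes_\alpha\IZ_p$, which requires the UCT for the crossed product and not merely $K_*=0$. That the UCT passes to crossed products by $\IZ_p$-actions is precisely the open problem this paper is about, so it cannot be taken for granted; it does hold here because of the Rokhlin property, but you must say why. The paper avoids the issue entirely by citing \cite[Theorem 3]{Santiago15}, which shows the crossed product of an algebra in Robert's class by a Rokhlin action stays in $\FC_R^0$; this simultaneously supplies the UCT and allows Robert's classification \cite{Robert12} (via $\Cu^\sim$, using $K_0=0$ and the unique bounded trace) to conclude $B\rtimes_\alpha\IZ_p\cong\CW$, with no need to separately verify finite nuclear dimension. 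Your EGLN route can be repaired the same way, but as written it is circular-adjacent at exactly the sensitive point.
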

\begin{proof}
Using the above Remark \ref{rem:robert-class}, we choose a \cstar-algebra $B\in\FC_R^0$ with a unique tracial state, no unbounded traces, and
\[
K_0(B) \cong \IZ[\nicefrac1p]^{\oplus p-1}.
\]
Since $B$ satisfies the UCT, it follows that $B\sim_{KK} M_{p^\infty}^{\oplus p-1}$, as the $K$-groups of these \cstar-algebras are isomorphic; see \cite{RosenbergSchochet87}.

As $p$ is prime, we note that the $K_0$-group is in fact isomorphic to the additive group of the ring $\IZ[\nicefrac1p, \xi_p]$, where $\xi_p = e^{\nicefrac{2\pi i}{p}}$.
Then there exists an automorphism $\beta$ on $B$ such that, under this identification, one has $K_0(\beta)(x) = \xi_p\cdot x$.

Similarly as in \cite{BarlakSzabo17}\footnote{The reference \cite[Section 5]{Nawata16} may be more appropriate for $p=2$.}, we will now reconstruct $B$ as a new inductive limit that will allow us to replace $\beta$ by a $\IZ_p$-action having the same invariant. 
Consider
\[
B_n = M_{p^{n-1}}\otimes B \quad\text{and}\quad \beta_n=\id_{M_{p^{n-1}}}\otimes\beta \quad\text{for } n\geq 1.
\]
From now on, we will use the index set $\IZ_p$ for the canonical matrix units in $M_p$, and identify $X^{\oplus p} \cong X^{\oplus \IZ_p}$ for any $X$ being either a \cstar-algebra or a group.
We will also denote $\beta_n^{\frak j} = \beta_n^j$ for all $n$, all $\frak j\in\IZ_p$ and $j\in\set{0,\dots,p-1}$ with $\frak j = j+p\IZ$.

We then define
\[
\Phi_n: B_n^{\oplus p} \to B_{(n+1)}^{\oplus p}
\]
via 
\[
\big[ \Phi_n(x_{\frak i})_{\frak i \in \IZ_p} \big]_{\frak j} = \diag\big( \beta_n^{\frak i}(x_{\frak i + \frak j}) \big)_{\frak i \in \IZ_p} 
\]
for $\frak j\in\IZ_p$.
Since this is just a combination of diagonal embeddings using compositions of $\beta_n$, we may describe this inductive limit on the level of $K$-theory by the following commutative diagram
\[
\xymatrix{
K_0( B_n^{\oplus p} ) \ar[d]_{\cong} \ar[rr]^{K_0(\Phi_n)} && K_0( B_{(n+1)}^{\oplus p} ) \ar[d]^{\cong} \\
\IZ[\nicefrac1p, \xi_p]^{\oplus p} \ar[rr]^{\phi} && \IZ[\nicefrac1p, \xi_p]^{\oplus p}
}
\]
where $\phi$ is given by multiplication with the $p \times p$-matrix
\[
\Xi_p = (\xi_p^{k-l})_{l,k=0}^{p-1} = \matrix{ 1 & \xi_p & \xi_p^2 & \cdots & \xi_p^{p-2} & \xi_p^{p-1} \\
\xi_p^{p-1} & 1 & \xi_p & \cdots & \xi_p^{p-3} & \xi_p^{p-2} \\
\vdots && \ddots && \vdots & \vdots \\
\vdots &&& \ddots & \vdots & \vdots \\
\xi_p^2 & \cdots & \cdots & \cdots & 1 & \xi_p \\
\xi_p & \cdots & \cdots & \cdots & \xi_p^{p-1} & 1
}.
\]
As the $K$-groups are uniquely $p$-divisible, using the simple relation
\[
\Xi_p^2 = p\cdot \Xi_p
\]
for this matrix yields that the inductive limit of the $K$-groups is isomorphic to the cokernel of $\phi$, which is easily seen to be isomorphic to $\IZ[\nicefrac1p, \xi_p] \cong \IZ[\nicefrac1p]^{\oplus p-1}$.

Now the limit is clearly separable, stably projectionless and is in Robert's class $\FC_R$.
As the connecting maps $\Phi_n$ are clearly non-degenerate, full, and trace-collapsing, it follows that the inductive limit
\[
\CB := \lim_{\longrightarrow} \{ B_n^{\oplus p}, \Phi_n \}
\]
is simple and has a unique tracial state, no unbounded traces and (thus automatically) a trivial pairing map.
As its $K$-theory is isomorphic to that of $B$, there exists an isomorphism $B\cong \CB$.
We will henceforth use this identification without much further mention.

This allows us to construct an inductive limit action $\alpha: \IZ_p\curvearrowright B \cong \CB$  arising on each building block from the action $\alpha^{(n)}: \IZ_p\curvearrowright B_n^{\oplus p}$ given by
\[
\alpha^{(n)}_{\frak j}(x_{\frak i})_{\frak i\in\IZ_p} = (x_{\frak i - \frak j})_{\frak i\in\IZ_p}.
\]
From the definition of the connecting maps, the formula
\[
\Phi_n\circ\alpha^{(n)}_{\frak j} = \alpha^{(n+1)}_{\frak j}\circ\Phi_n
\]
is evident for all $n$ and $\frak j\in\IZ_p$. In particular, $\alpha$ is well-defined.

Moreover, the standard identification $K_0( B_n^{\oplus p} ) \cong \IZ[\nicefrac1p, \xi_p]^{\oplus p}$ turns the $\IZ_p$-action $K_0(\alpha)$ into the canonical shift $\sigma: \IZ_p\curvearrowright\IZ[\nicefrac1p, \xi_p]^{\oplus p}$ on the level of abelian groups.
Using the above diagrams, this allows us to see that
\[
K_0(\Phi_n)\circ K_0(\alpha^{(n)}_1) = \phi\circ\sigma_1 = \Xi_p\cdot (\delta_{\frak i, \frak j+1})_{\frak i, \frak j \in\IZ_p} = \xi_p\cdot \Xi_p = (\xi_p\cdot\_\!\_\!\_)\circ K_0(\Phi_n).
\]
In other words, $K_0(\alpha_1)(x) = \xi_p\cdot x$ for all $x\in K_0(B)\cong \IZ[\nicefrac1p, \xi_p]$.

From the construction of $\alpha$, it is clear that it has the Rokhlin property.
In fact on every building block, one has a canonical equivariant embedding from $\CC(\IZ_p)$ equipped with the shift into the center of $\mathcal M( B_n^{\oplus p} )$, and so $\alpha$ has the Rokhlin property arising as an equivariant inductive limit of Rokhlin actions.
Applying \cite[Theorem 3]{Santiago15}, we deduce that $B\rtimes_\alpha\IZ_p$ is a \cstar-algebra in the class $\FC_R^0$.
It clearly has a unique and bounded trace, and is Morita equivalent to the fixed point algebra $B^\alpha$, whose $K$-theory is naturally isomorphic to\footnote{See \cite[Theorem 4.9]{BarlakSzabo16ss}. In the simple unital case, this observation is \cite[Theorem 3.13]{Izumi04}.}
\[
\{ x\in K_*(B) \mid K_*(\alpha_1)(x) = \xi_p\cdot x = x \} = 0.
\]
In summary, we deduce that $B\rtimes_\alpha\IZ_p\cong\CW$.
We obtain the action $\gamma: \IZ_p\curvearrowright\CW$ as the dual $\gamma=\hat{\alpha}$ under this identification, which has the desired property by Takai duality \cite{Takai75}.
As $\alpha$ has the Rokhlin property, $\gamma$ will be approximately representable by \cite[Proposition 4.4]{Nawata16}; in fact locally $B$-representable by \cite[Theorem 3.4]{BarlakSzabo17}.
It is also strongly outer as the crossed product, which is isomorphic to $B$, has a unique trace. This finishes the proof.
\end{proof}

\begin{rem} \label{rem:shorter-proof}
We note that most of the above proof of Proposition \ref{prop:prime-example} is given in detail only in order for this note to be more self-contained.
In fact, right after choosing $\beta\in\Aut(B)$ near the beginning of the proof, one can deduce from Remark \ref{rem:robert-class} that $\beta$ defines a $\IZ_p$-action up to approximate unitary equivalence.
Due to classification, one has $B\cong B\otimes M_{p^\infty}$ as the $K$-groups of $B$ are uniquely $p$-divisible.
Applying the much more general existence result \cite[Theorem 2.3]{BarlakSzabo17} would then immediately give the Rokhlin action $\alpha:\IZ_p\curvearrowright B$ with $\alpha_1\ue\beta$, which implies $K_0(\alpha_1)(x)=\xi_p\cdot x$ and allows one to finish the proof in the same manner as above.
\end{rem}


\section{Main theorem}

The following insight due to Dadarlat forms the basis of our main result.
See \cite{Lin01, Lin04} for details surrounding tracially AF \cstar-algebras.

We note that the equivalence \ref{thm:dadarlat:1}$\Leftrightarrow$\ref{thm:dadarlat:2} is actually proved in Dadarlat's work, whereas the equivalence \ref{thm:dadarlat:2}$\Leftrightarrow$\ref{thm:dadarlat:3} follows from the simple fact that any \cstar-algebra as in \ref{thm:dadarlat:2} is automatically monotracial.
We will only use the equivalence \ref{thm:dadarlat:1}$\Leftrightarrow$\ref{thm:dadarlat:3} in the sequel.

\begin{theorem}[see {\cite[Theorem 1.2]{Dadarlat03}}] \label{thm:dadarlat}
The following are equivalent:
\begin{enumerate}[label=\textup{(\roman*)}, leftmargin=*]
\item The UCT holds for every separable, nuclear \cstar-algebra; \label{thm:dadarlat:1}
\item if $A$ is any separable, unital, simple, nuclear, tracially AF \cstar-algebra such that one has an order-isomorphism $(K_0(A),K_1(A))\cong (\IQ,0)$, then $A$ is isomorphic to the universal UHF algebra;\label{thm:dadarlat:2}
\item the UCT holds for every separable, unital, simple, nuclear \cstar-algebra with a unique tracial state. \label{thm:dadarlat:3}
\end{enumerate}
\end{theorem}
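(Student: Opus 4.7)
The plan is to split the three-way equivalence along the lines already indicated in the paragraph preceding the statement. The equivalence \ref{thm:dadarlat:1}$\Leftrightarrow$\ref{thm:dadarlat:2} would simply be invoked as a black box from \cite[Theorem 1.2]{Dadarlat03}, since reproving Dadarlat's reduction is neither our purpose nor feasible in a short argument. The implication \ref{thm:dadarlat:1}$\Rightarrow$\ref{thm:dadarlat:3} is trivial because \ref{thm:dadarlat:3} merely specializes the UCT assertion to a subclass of separable, nuclear \cstar-algebras. Hence the only direction requiring fresh argument is \ref{thm:dadarlat:3}$\Rightarrow$\ref{thm:dadarlat:2}.

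To treat this last direction, I would take any $A$ as in \ref{thm:dadarlat:2} and first establish that $A$ is monotracial. The ordered abelian group $(\IQ,\IQ_+)$ with order unit $1$ admits exactly one state, namely the inclusion $\IQ\hookrightarrow\IR$. On the other hand, for any separable, unital, simple, nuclear, tracially AF \cstar-algebra, the canonical affine map from the tracial state space to the state space of the ordered, scaled $K_0$-group is a bijection---a standard fact from Lin's theory \cite{Lin01, Lin04}, resting on the abundance of projections granted by the tracially AF hypothesis. Combining these two observations forces $A$ to carry a unique tracial state. Hypothesis \ref{thm:dadarlat:3} now applies and delivers the UCT for $A$. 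The conclusion is then immediate from Lin's classification theorem for tracially AF \cstar-algebras satisfying the UCT: since $A$ and the universal UHF algebra share the same Elliott invariant (ordered $K$-theory $(\IQ,0)$ and a unique trace), they must be isomorphic.

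The main obstacle here, though not a serious one, is verifying that traces on $A$ are indeed parametrized by states on $K_0(A)$; this is not a direct consequence of the abstract definition of a tracially AF \cstar-algebra and should be pinned down by locating the appropriate statement in Lin's TAF machinery, rather than reproved from scratch. With that in hand, the rest of the proof reduces to an application of the already-cited results of Dadarlat and Lin, combined with the trivial specialization \ref{thm:dadarlat:1}$\Rightarrow$\ref{thm:dadarlat:3}.
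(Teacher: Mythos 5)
Your proposal is correct and takes essentially the same route as the paper: the paper likewise quotes Dadarlat's theorem as a black box for \ref{thm:dadarlat:1}$\Leftrightarrow$\ref{thm:dadarlat:2}, notes \ref{thm:dadarlat:1}$\Rightarrow$\ref{thm:dadarlat:3} is a trivial specialization, and closes the loop \ref{thm:dadarlat:3}$\Rightarrow$\ref{thm:dadarlat:2} via the observation that any algebra as in \ref{thm:dadarlat:2} is automatically monotracial, with Lin's classification (already embedded in Dadarlat's argument) finishing the identification with the universal UHF algebra. Your pinning of monotraciality to the pairing between traces and states on the ordered, scaled $K_0$-group of a tracially AF algebra is precisely the ``simple fact'' the paper's preceding paragraph alludes to without proof.
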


The following arises as an application of the recent advances in the structure theory of simple \cstar-algebras.
This should be understood as a monotracial variant of the Kirchberg--Phillips $\CO_2$-absorption theorem \cite{KirchbergPhillips00}.

\begin{theorem} \label{thm:Robert}
Let $A$ be a separable, unital, simple, and nuclear \cstar-algebra. Then $A$ has a unique tracial state if and only if $A\otimes\CW\cong\CW$.
\end{theorem}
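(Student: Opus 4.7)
The plan is to handle the two directions of the equivalence separately, with the ``only if'' direction being a direct tracial computation and the substantive ``if'' direction reducing to an application of the classification theorem for $KK$-contractible \cstar-algebras of finite nuclear dimension.

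For the ``only if'' direction, the idea is to exploit that $A\otimes\CW\cong\CW$ has a unique bounded tracial state $\tau_\CW$, together with nuclearity of both factors. Fixing a positive element $h\in\CW$ with $\tau_\CW(h)=1$, the formula $\tau_A(a):=\tau_\CW(a\otimes h)$ defines a tracial state on $A$, and uniqueness follows since any other tracial state $\sigma$ on $A$ would yield a tensor trace $\sigma\otimes\tau_\CW$ on $A\otimes\CW\cong\CW$ forced to agree with $\tau_\CW$, leading to $\sigma(a)=\tau_\CW(a\otimes h)$ upon evaluation on elementary tensors.

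For the ``if'' direction, the strategy is to verify that $A\otimes\CW$ satisfies the hypotheses of the Elliott--Gong--Lin--Niu classification \cite{ElliottGongLinNiu17} and has the Elliott invariant of $\CW$ itself. Concretely, I would check that $A\otimes\CW$ is (i) separable, simple, and nuclear, immediate from the respective properties of the tensor factors and the fact that the tensor product of simple nuclear \cstar-algebras is simple; (ii) $KK$-contractible, since $\CW\sim_{KK}0$ and tensoring with a nuclear algebra preserves $KK$-equivalences; (iii) monotracial, by restricting any trace on the tensor product to each factor via limits against approximate units and using uniqueness on both sides to force it to equal $\tau_A\otimes\tau_\CW$; and (iv) stably projectionless, which for simple stably finite algebras with trivial $K_0$ follows because a nonzero projection would be nonzero on the faithful trace yet lie in the kernel of the zero map $K_0\to\IR$. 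The nontrivial ingredient is finite nuclear dimension of $A\otimes\CW$: by the Matui--Sato / Sato--White--Winter theorem \cite{SatoWhiteWinter15}, a separable, unital, simple, nuclear, monotracial \cstar-algebra is $\CZ$-stable and hence of finite nuclear dimension, and this property transfers to $A\otimes\CW$ since $\CW$ itself has finite nuclear dimension.

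The main obstacle in this plan is invoking the classification theorem \cite{ElliottGongLinNiu17} in the precise form needed, as it characterizes $\CW$ as the unique separable, simple, nuclear, stably projectionless, $KK$-contractible, monotracial \cstar-algebra of finite nuclear dimension; granting this, the five verifications above conclude the argument. A secondary technical point requiring some care is the uniqueness of the tracial state on the minimal tensor product in the non-unital setting, which is facilitated by nuclearity and monotraciality of both factors.
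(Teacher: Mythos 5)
There is a genuine gap in your substantive direction, precisely at the finite nuclear dimension step. You assert that ``by the Matui--Sato / Sato--White--Winter theorem, a separable, unital, simple, nuclear, monotracial \cstar-algebra is $\CZ$-stable.'' This is not what those theorems say, and the statement is false in this generality: the main result of \cite{SatoWhiteWinter15} has $\CZ$-stability as a \emph{hypothesis}, not a conclusion (it says that a separable, simple, unital, nuclear, monotracial, $\CZ$-stable \cstar-algebra has finite nuclear dimension), and there exist separable, simple, unital, nuclear \cstar-algebras with a unique tracial state that are not $\CZ$-stable --- for instance Villadsen's AH algebras of stable rank $n\geq 2$, which cannot be $\CZ$-stable since a stably finite, simple, unital, $\CZ$-stable \cstar-algebra has stable rank one. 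So, as written, your argument has no legitimate source of $\CZ$-stability for $A$, and without it the passage to finite nuclear dimension of $A\otimes\CW$ collapses. The repair is exactly the first line of the paper's proof, which you are missing: since $\CW$ is $\CZ$-stable, one has $A\otimes\CW\cong(A\otimes\CZ)\otimes\CW$, so one may replace $A$ by $A\otimes\CZ$ --- still separable, unital, simple, nuclear and monotracial --- and thereby assume $A$ is $\CZ$-stable; \emph{then} \cite{SatoWhiteWinter15} applies to give $\dimnuc A<\infty$, whence $A\otimes\CW$ has finite nuclear dimension, and your remaining checklist (simplicity, $KK$-contractibility, unique bounded trace, stable projectionlessness, followed by \cite[Theorem 7.5]{ElliottGongLinNiu17}) is the same as the paper's argument. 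One further small omission: for the classification you should also verify that $A\otimes\CW$ has \emph{no unbounded traces}, as the paper does; this follows by the same slicing argument you use for monotraciality.

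Two lesser remarks. Your ``if''/``only if'' labels are swapped relative to the statement, though the content makes clear which direction is which. More interestingly, your treatment of the easy direction ($A\otimes\CW\cong\CW$ implies $A$ monotracial) is actually \emph{more} elementary than the paper's: by constructing a tracial state on $A$ directly via $a\mapsto\tau(a\otimes h)$ for a suitably normalized positive $h\in\CW$, you automatically rule out the traceless case, whereas the paper handles that case separately via R{\o}rdam's dichotomy \cite{Rordam04} and Kirchberg--Phillips classification (identifying $A\otimes\CW$ with $\CO_2\otimes\CK$ when $A$ is traceless). That detour becomes unnecessary in your approach, which is a genuine, if minor, simplification --- but it does not compensate for the missing $\CZ$-stabilization trick above, which is essential.
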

\begin{proof}
As $\CW$ is $\CZ$-stable, one has $A\otimes\CW \cong (A\otimes\CZ)\otimes\CW$, and hence we may assume that $A$ is $\CZ$-stable.

Let us show the ``if'' part.
We may assume that $A$ has tracial states. For if $A$ is traceless, then it is a Kirchberg algebra --- see \cite{Rordam04} --- and thus $A\otimes\CW\cong\CO_2\otimes\CK$ by Kirchberg--Phillips classification \cite{Phillips00} as this tensor product is a $KK$-contractible, non-unital Kirchberg algebra.
If $A$ has more than one trace, then this immediately gives rise to distinct tracial states on $A\otimes\CW$, so it cannot be isomorphic to $\CW$, which has a unique trace. Thus $A$ is indeed monotracial.

For the ``only if'' part, we assume that $A$ is monotracial.
As it is also $\CZ$-stable by assumption, it follows from the main result of \cite{SatoWhiteWinter15} that $A$ has finite nuclear dimension.
In particular, the tensor product $A\otimes\CW$ also has finite nuclear dimension.
But then it follows from \cite[Theorem 7.5]{ElliottGongLinNiu17} that $A\otimes\CW$ is classifiable.
As it is $KK$-contractible, has a unique tracial state and no unbounded traces, it must be isomorphic to $\CW$.
\end{proof}

We now come to the main result of this note, which is a finite analog of \cite[Theorem 4.17]{BarlakSzabo17}.

\begin{theorem} \label{thm:main-result}
Let $q_1, q_2\geq 2$ be two distinct prime numbers.
Then the following are equivalent:
\begin{enumerate}[label=\textup{(\roman*)}, leftmargin=*, resume]
\item The UCT holds for all separable, nuclear \cstar-algebras; \label{main-result:1}
\item for $p\in\set{q_1, q_2}$ and for all strongly outer, approximately representable actions $\alpha: \IZ_p\curvearrowright\CW$, the crossed product $\CW\rtimes_\alpha\IZ_p$ satisfies the UCT. \label{main-result:2}
\end{enumerate}
\end{theorem}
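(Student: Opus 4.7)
The implication \ref{main-result:1}$\Rightarrow$\ref{main-result:2} is immediate, as all crossed products in question are separable and nuclear. For the converse, I would mirror the strategy of the $\CO_2$-version in \cite[Theorem 4.17]{BarlakSzabo17}, replacing the Kirchberg--Phillips $\CO_2$-absorption theorem with Theorem~\ref{thm:Robert} and Kirchberg's reduction theorem with Dadarlat's Theorem~\ref{thm:dadarlat}. Applying the equivalence \ref{thm:dadarlat:1}$\Leftrightarrow$\ref{thm:dadarlat:3} of Theorem~\ref{thm:dadarlat}, it suffices to establish the UCT for an arbitrary separable, unital, simple, nuclear \cstar-algebra $A$ with unique tracial state. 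For any such $A$, Theorem~\ref{thm:Robert} yields an isomorphism $A\otimes\CW\cong\CW$.

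For each $p\in\set{q_1,q_2}$, let $\gamma_p: \IZ_p\curvearrowright\CW$ be the model action from Proposition~\ref{prop:prime-example}, and form the diagonal action $\id_A\otimes\gamma_p$ on $A\otimes\CW\cong\CW$. One verifies that this tensored action remains approximately representable, using $1_A\otimes x_{g,n}$ as witnesses, and strongly outer, using that $A\otimes\CW$ has a unique tracial state (tensor product of the unique traces on $A$ and $\CW$), that the induced action on the corresponding GNS von Neumann algebra factorises as $\id\otimes\widetilde{\gamma_p}$, and that $\widetilde{\gamma_p}$ is outer on the tracial GNS completion of $\CW$ (since $\CW\rtimes_{\gamma_p}\IZ_p$ has a unique trace by construction). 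Applying hypothesis \ref{main-result:2} to $\id_A\otimes\gamma_p$ yields that
\[
(A\otimes\CW)\rtimes_{\id_A\otimes\gamma_p}\IZ_p \ \cong \ A\otimes(\CW\rtimes_{\gamma_p}\IZ_p)
\]
satisfies the UCT. Since $A$ is nuclear, the $KK$-equivalence from Proposition~\ref{prop:prime-example} promotes to
\[
A\otimes(\CW\rtimes_{\gamma_p}\IZ_p) \ \sim_{KK} \ A\otimes M_{p^\infty}^{\oplus p-1},
\]
and since the UCT class is closed under $KK$-equivalence and direct summands, we conclude that $A\otimes M_{p^\infty}$ satisfies the UCT for both $p=q_1$ and $p=q_2$.

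At this point I would invoke the \emph{two-distinct-primes principle} from \cite[Theorem 4.17]{BarlakSzabo17}: if $A$ is separable and nuclear and $A\otimes M_{q_1^\infty}$, $A\otimes M_{q_2^\infty}$ both satisfy the UCT for distinct primes $q_1,q_2$, then $A$ itself satisfies the UCT. Morally, this rests on the arithmetic pull-back expressed by the short exact sequence
\[
0 \to \IZ \to \IZ[\nicefrac{1}{q_1}]\oplus \IZ[\nicefrac{1}{q_2}] \to \IZ[\nicefrac{1}{q_1 q_2}] \to 0
\]
and on the closure properties (two-out-of-three for extensions, $KK$-equivalences) of the bootstrap class. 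Once this step is applied, $A$ satisfies the UCT, completing the reduction.

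The two points I expect to require the most care are the verification of strong outerness of $\id_A\otimes\gamma_p$, which forces one to identify the tracial GNS factor of $A\otimes\CW$ and recall that tensoring an outer automorphism of a factor by the identity on another factor remains outer; and the two-distinct-primes step, though the latter is essentially importable verbatim from \cite{BarlakSzabo17}. Everything else is a routine assembly of Proposition~\ref{prop:prime-example}, Theorem~\ref{thm:Robert}, and Theorem~\ref{thm:dadarlat}.
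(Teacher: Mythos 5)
Your proposal is correct and takes essentially the same route as the paper: both rest on the equivalence \ref{thm:dadarlat:1}$\Leftrightarrow$\ref{thm:dadarlat:3} of Theorem~\ref{thm:dadarlat}, the absorption $A\otimes\CW\cong\CW$ from Theorem~\ref{thm:Robert}, the model action of Proposition~\ref{prop:prime-example} tensored with $\id_A$ (whose strong outerness and approximate representability the paper asserts with less detail than you supply), and the two-primes argument via the short exact sequence $\CC_0(\IR,A\otimes M_{(q_1q_2)^\infty})\into A\otimes Z_{q_1^\infty,q_2^\infty}\onto A\otimes(M_{q_1^\infty}\oplus M_{q_2^\infty})$ together with $Z_{q_1^\infty,q_2^\infty}\sim_{KK}\IC$, which is the actual content behind your ``two-distinct-primes principle'' (your group extension $0\to\IZ\to\IZ[\nicefrac{1}{q_1}]\oplus\IZ[\nicefrac{1}{q_2}]\to\IZ[\nicefrac{1}{q_1q_2}]\to 0$ is only its $K$-theoretic shadow). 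The sole organizational difference is that the paper argues by contradiction, transferring the failure of the UCT to $A\otimes M_{p^\infty}$ for one prime $p$ at the outset and then invoking the hypothesis for that single prime, whereas you run the contrapositive directly, using the hypothesis for both primes and pulling the UCT back to $A$ at the end --- logically the same step in mirror image.
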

\begin{proof}
The implication \ref{main-result:1}$\Rightarrow$\ref{main-result:2} is obviously trivial.

\ref{main-result:2}$\Rightarrow$\ref{main-result:1}:
Suppose that the UCT fails for some separable, nuclear \cstar-algebra.
Then by Theorem \ref{thm:dadarlat}, it fails for a separable, unital, simple, nuclear \cstar-algebra $A$ with a unique tracial state.
There exists a natural short exact sequence
\[
\xymatrix{
\CC_0\big( \IR,A\otimes M_{(q_1q_2)^\infty} \big) \ar@{^(->}[r] & A\otimes Z_{q_1^\infty,q_2^\infty} \ar@{->>}[r] & A\otimes (M_{q_1^\infty}\oplus M_{q_2^\infty}).
}
\]
Since $Z_{q_1^\infty,q_2^\infty}\sim_{KK}\IC$, the UCT must fail for $A\otimes M_{q_1^\infty}$ or $A\otimes M_{q_2^\infty}$.
In particular, we may choose $p\in\set{q_1,q_2}$ and assume that $A\cong A\otimes M_{p^\infty}$.

Let $\gamma: \IZ_p\curvearrowright \CW$ be the action from Proposition \ref{prop:prime-example}.
Then the UCT fails for $A\otimes M_{p^\infty}^{\oplus p-1} \sim_{KK} A\otimes (\CW\rtimes_\gamma\IZ_p)$.
Theorem \ref{thm:Robert} implies $A\otimes\CW\cong\CW$.
So if use this identification and set $\alpha=\id_A\otimes\gamma: \IZ_p\curvearrowright A\otimes\CW\cong\CW$, then we get that
\[
\CW\rtimes_\alpha\IZ_p \cong A\otimes (\CW\rtimes_\gamma\IZ_p) \sim_{KK} A\otimes M_{p^\infty}^{\oplus p-1} \cong A^{\oplus p-1}
\]
does not satisfy the UCT.
Clearly $\alpha$ is strongly outer and approximately representable as $\gamma$ had these properties.
This finishes the proof.
\end{proof}

Let us now explain how Theorem \ref{thm:main-result} is a priori stronger than our previous characterization of the UCT problem \cite{BarlakSzabo17} in terms of crossed products on $\CO_2$.

\begin{rem} \label{rem:a-priori-stronger}
Let $p\geq 2$ be a prime number.
If all crossed products of the form $\CO_2\rtimes_\alpha\IZ_p$ satisfy the UCT, then so do all crossed products of the form $\CW\rtimes_{\alpha'}\IZ_p$.\footnote{Although we do not need it, the argument works in fact for arbitrary finite groups in place of $\IZ_p$.}
The same is true if we restrict the statement to actions $\alpha: \IZ_p\curvearrowright\CO_2$ and $\alpha':\IZ_p\curvearrowright\CW$ that are assumed to be outer and/or approximately representable.
In particular, Theorem \ref{thm:main-result} in combination with \cite[Theorem 3.4]{BarlakSzabo17} formally implies \cite[Theorem 4.17 (1)$\Leftrightarrow$(3)]{BarlakSzabo17}.
\end{rem}
\begin{proof}
Let $\alpha': \IZ_p\curvearrowright\CW$ be an action.
It is well-known that the unital inclusion $\IC\subset\CO_\infty$ is a $KK$-equivalence; see \cite{Cuntz81}.
Then $\alpha'\otimes\id_{\CO_\infty}: \IZ_p\curvearrowright\CW\otimes\CO_\infty$ is an action on $\CW\otimes\CO_\infty\cong\CO_2\otimes\CK$ such that its crossed product satisfies the UCT precisely when $\CW\rtimes_{\alpha'}\IZ_p$ satisfies the UCT.

Now let $\beta: \IZ_p\curvearrowright\CO_2\otimes\CK$ be some arbitrary action.
As all non-trivial projections in $\CO_2 \otimes \CK$ are equivalent, there exists a unitary $z\in\CU(\CM(\CO_2\otimes\CK))$ with $\ad(z)\circ\beta\circ(\eins_{\CO_2}\otimes\id_\CK)=\eins_{\CO_2}\otimes\id_\CK$.
We denote $z_{\mathfrak j}=z\beta(z)\cdots\beta^{j-1}(z)$ for $\mathfrak j\in\IZ_p$, where $j=0,\dots,p-1$ with $\mathfrak j=j+p\IZ$.
Note that this yields the formula $\ad(z_{\frak j})\circ\beta_{\frak j}\circ(\eins_{\CO_2}\otimes\id_\CK)=\eins_{\CO_2}\otimes\id_\CK$ for all $\frak j\in\IZ_p$.

Let us consider the family $\set{w_{\frak i, \frak j}}_{\frak i,\frak j\in\IZ_p}$ in $\CU(\CM(\CO_2\otimes\CK))$ given by $w_{\frak i,\frak j} = z_{\frak i}\beta_{\frak i}(z_{\frak j})z^*_{\frak i+\frak j}$. 
Then we obtain a cocycle action $(\beta',w): \IZ_p\curvearrowright\CO_2\otimes\CK$ via $\beta'_{\frak j}=\ad(z_{\frak j})\circ\beta_{\frak j}$.
By definition, $\beta$ is exterior equivalent to $(\beta',w)$.
It is also immediate from the construction that $\beta'$ fixes elements in $\eins_{\CO_2}\otimes\CK$ pointwise, and that the family $\set{w_{\frak i,\frak j}}$ commutes with $\eins_{\CO_2}\otimes\CK$.
Hence $(\beta',w)$ restricts to a cocycle action 
\[
(\gamma,v): \IZ_p\curvearrowright\CO_2=\CO_2\otimes\eins =\CM(\CO_2\otimes\CK)\cap (\eins_{\CO_2}\otimes\CK)'
\]
such that $(\beta',w)=(\gamma\otimes\id_\CK,v\otimes\eins)$; see \cite[Proposition 1.7]{BarlakSzabo16ss}.
Using the Packer--Raeburn 2-cocycle vanishing trick \cite[Theorem 3.4]{PackerRaeburn89}, there exists some genuine action $\delta: \IZ_p\curvearrowright \CO_2\otimes M_p\cong\CO_2$ that is cocycle conjugate to $(\gamma\otimes\id_{M_p},v\otimes\eins_{M_p})$.
To summarize, the action $\beta$ is equivalently Morita equivalent to $\delta$, and on the level of crossed products we obtain
\[
\begin{array}{ccl}
(\CO_2\otimes\CK)\rtimes_\beta\IZ_p &\cong& (\CO_2\otimes\CK)\rtimes_{\beta',w}\IZ_p \\
&\cong& (\CO_2\rtimes_{\gamma,v}\IZ_p)\otimes\CK \\
&\cong& (\CO_2\rtimes_{\gamma,v}\IZ_p)\otimes M_p\otimes\CK \\
&\cong& (\CO_2\rtimes_\delta\IZ_p)\otimes\CK.
\end{array}
\]
The claim now follows when we combine all steps so far.
The second part of the statement follows as the class of all outer and/or approximately representable actions is invariant under tensoring with $\id_{\CO_\infty}$ and under equivariant Morita equivalence; see \cite[Propositions 3.14 and 4.23]{BarlakSzabo16ss} for details.
\end{proof}

\begin{rem}
It is unclear whether there is any direct formal converse to Remark \ref{rem:a-priori-stronger}.
In any case, it is not hard to see that the assignment $\alpha'\mapsto\alpha'\otimes\id_{\CO_\infty}$ for outer actions $\alpha': \IZ_p\curvearrowright\CW$ does not remember all the dynamical information.
In fact, let us briefly sketch how to obtain two non-cocycle conjugate actions the images of which become cocycle conjugate under this assignment.
Consider $\gamma$ to be the action constructed in Proposition \ref{prop:prime-example}.

Similarly as in the proof of Proposition \ref{prop:prime-example}, we may consider $D$ to be the \cstar-algebra in $\FC_R^0$ with continuous scale, $K_0(D)\cong\IZ[\nicefrac 1p,\xi_p]$, and two extremal tracial states $\tau_0$ and $\tau_1$.
By Robert's classification, there exists $\beta\in\Aut(D)$ satisfying $K_0(\beta)=\xi_p\cdot\_\!\_\!\_$ and $\tau_i\circ\beta=\tau_{1-i}$. 
This allows one to run the rest of the argument in the completely analogous fashion (or to proceed as suggested in Remark \ref{rem:shorter-proof}), and obtain an approximately representable action $\delta: \IZ_p\curvearrowright\CW$ with $\CW\rtimes_\delta\IZ_p\cong D$.
Clearly $\gamma$ and $\delta$ are not cocycle conjugate as their crossed products are  non-isomorphic. 
In fact, they have $KK$-equivalent crossed products, but $\gamma$ is strongly outer while $\delta$ is not.

However, the actions $\gamma\otimes\id_{\CO_\infty}$ and $\delta\otimes\id_{\CO_\infty}$ can be identified with approximately representable actions on $\CW\otimes\CO_\infty\cong\CO_2\otimes\CK$ whose crossed products are isomorphic.
Since the dual actions act the same way on $K$-theory, namely via multiplication by $\xi_p$ on the $K_0$-group $\IZ[\nicefrac 1p, \xi_p]$, these actions turn out to be cocycle conjugate; see \cite{Izumi04, Izumi04II}.
\end{rem}

We end this note with two questions, which are motivated by analogous known problems about the Cuntz algebra $\CO_2$.

\begin{question}
Is every action of a finite group on the Razak--Jacelon algebra $\CW$ approximately representable?
What about actions of countable discrete groups?
\end{question}

\begin{question}[cf.\ {\cite{BarlakLi17, BarlakLi17_2}}]
Let $p\geq 2$ be a prime number and let $\alpha: \IZ_p\curvearrowright\CW$ be a strongly outer, approximately representable action.
Does $\CW\rtimes_\alpha\IZ_p$ satisfy the UCT precisely when $\alpha$ fixes a Cartan subalgebra of $\CW$ globally?
\end{question}

\bibliographystyle{gabor}
\bibliography{master2}

\begin{thebibliography}{10}
\providecommand{\url}[1]{\texttt{#1}}
\providecommand{\urlprefix}{URL }

\bibitem{BarlakLi17}
S.~Barlak, X.~Li: Cartan subalgebras and the {UCT} problem.
\newblock Adv. Math. 316 (2017), pp. 748--769.

\bibitem{BarlakLi17_2}
S.~Barlak, X.~Li: Cartan subalgebras and the {UCT} problem, {II}  (2017).
\newblock \urlprefix\url{https://arxiv.org/abs/1704.04939}.

\bibitem{BarlakSzabo16ss}
S.~Barlak, G.~Szab{\'o}: Sequentially split $*$-homomorphisms between
  \cstar-algebras.
\newblock Int. J. Math. 27 (2016), no.~12.
\newblock 48 pages.

\bibitem{BarlakSzabo17}
S.~Barlak, G.~Szab{\'o}: Rokhlin actions of finite groups on {UHF}-absorbing
  \cstar-algebras.
\newblock Trans. Amer. Math. Soc. 369 (2017), pp. 833--859.

\bibitem{BlaKK}
B.~Blackadar: {$K$}-theory for operator algebras.
\newblock Second edition. Cambridge University Press (1998).

\bibitem{BBSTWW}
J.~Bosa, N.~Brown, Y.~Sato, A.~Tikuisis, S.~White, W.~Winter: Covering
  dimension of {$\mathrm{C}^*$}-algebras and 2-coloured classification.
\newblock Mem. Amer. Math. Soc., to appear  (2016).
\newblock \urlprefix\url{http://arxiv.org/abs/1506.03974}.

\bibitem{Cuntz77}
J.~Cuntz: Simple \cstar-algebras generated by isometries.
\newblock Comm. Math. Phys. 57 (1977), no.~2, pp. 173--185.

\bibitem{Cuntz81}
J.~Cuntz: {$K$}-theory for certain \cstar-algebras.
\newblock Ann. of Math. 113 (1981), pp. 181--197.

\bibitem{Dadarlat03}
M.~Dadarlat: Some remarks on the universal coefficient theorem in
  {$KK$}-theory.
\newblock In Operator algebras and mathematical physics (Constanţa), pp.
  65--74 (2003).

\bibitem{ElliottGongLinNiu15}
G.~A. Elliott, G.~Gong, H.~Lin, Z.~Niu: On the classification of simple
  \cstar-algebras with finite decomposition rank, {II}  (2015).
\newblock \urlprefix\url{http://arxiv.org/abs/1507.03437}.

\bibitem{ElliottGongLinNiu17}
G.~A. Elliott, G.~Gong, H.~Lin, Z.~Niu: The classification of simple separable
  {$KK$}-contractible \cstar-algebras with finite nuclear dimension  (2017).
\newblock \urlprefix\url{https://arxiv.org/abs/1712.09463}.

\bibitem{GardellaSantiago16}
E.~Gardella, L.~Santiago: Equivariant {$*$}-homomorphisms, {R}okhlin contraints
  and equivariant {UHF}-absorption.
\newblock J. Funct. Anal. 270 (2016), no.~7, pp. 2543--2590.

\bibitem{GongLin17}
G.~Gong, H.~Lin: On classification of non-unital simple amenable
  \cstar-algebras, {II}  (2017).
\newblock \urlprefix\url{https://arxiv.org/abs/1702.01073}.

\bibitem{GongLinNiu15}
G.~Gong, H.~Lin, Z.~Niu: {C}lassification of simple amenable {$\CZ$}-stable
  \cstar-algebras  (2015).
\newblock \urlprefix\url{http://arxiv.org/abs/1501.00135}.

\bibitem{Izumi04}
M.~Izumi: Finite group actions on \cstar-algebras with the {R}ohlin property
  {I}.
\newblock Duke Math. J. 122 (2004), no.~2, pp. 233--280.

\bibitem{Izumi04II}
M.~Izumi: Finite group actions on \cstar-algebras with the {R}ohlin property
  {II}.
\newblock Adv. Math. 184 (2004), no.~1, pp. 119--160.

\bibitem{Jacelon13}
B.~Jacelon: A simple, monotracial, stably projectionless \cstar-algebra.
\newblock J. Lond. Math. Soc. 87 (2013), no.~2, pp. 365--383.

\bibitem{Katsura08_2}
T.~Katsura: A class of \cstar-algebras generalizing both graph algebras and
  homeomorphism \cstar-algebras {IV}, pure infiniteness.
\newblock J. Funct. Anal. 254 (2008), pp. 1161--1187.

\bibitem{KirchbergC}
E.~Kirchberg: The {C}lassification of {P}urely {I}nfinite \cstar-{A}lgebras
  {U}sing {K}asparov's {T}heory (2003).
\newblock Preprint.

\bibitem{Kirchberg04}
E.~Kirchberg: Central sequences in \cstar-algebras and strongly purely infinite
  algebras.
\newblock Operator Algebras: The Abel Symposium 1 (2004), pp. 175--231.

\bibitem{KirchbergPhillips00}
E.~Kirchberg, N.~C. Phillips: Embedding of exact \cstar-algebras in the {C}untz
  algebra {$\CO_2$}.
\newblock J. reine angew. Math. 525 (2000), pp. 17--53.

\bibitem{Lin01}
H.~Lin: Tracially {AF} \cstar-algebras.
\newblock Trans. Amer. Math. Soc. 353 (2001), no.~2, pp. 693--722.

\bibitem{Lin04}
H.~Lin: Classification of simple \cstar-algebras of tracial topological rank
  zero.
\newblock Duke Math. J. 125 (2004), no.~1, pp. 91--118.

\bibitem{MatuiSato12}
H.~Matui, Y.~Sato: Strict comparison and {$\mathcal{Z}$}-absorption of nuclear
  \cstar-algebras.
\newblock Acta Math. 209 (2012), no.~1, pp. 179--196.

\bibitem{MatuiSato14UHF}
H.~Matui, Y.~Sato: Decomposition rank of {UHF}-absorbing \cstar-algebras.
\newblock Duke Math. J. 163 (2014), no.~14, pp. 2687--2708.

\bibitem{Nawata16}
N.~Nawata: Finite group actions on certain stably projectionless
  \cstar-algebras with the {R}ohlin property.
\newblock Trans. Amer. Math. Soc. 368 (2016), no.~1, pp. 471--493.

\bibitem{PackerRaeburn89}
J.~A. Packer, I.~Raeburn: Twisted crossed products of \cstar-algebras.
\newblock Math. Proc. Cambridge Philos. Soc. 106 (1989), no.~2, pp. 293--311.

\bibitem{Phillips00}
N.~C. Phillips: A classification theorem for nuclear purely infinite simple
  \cstar-algebras.
\newblock Doc. Math. 5 (2000), pp. 49--114.

\bibitem{Razak02}
S.~Razak: On the classification of simple stably projectionless
  \cstar-algebras.
\newblock Can. J. Math. 54 (2002), no.~1, pp. 138--224.

\bibitem{Renault08}
J.~Renault: Cartan subalgebras in \cstar-algebras.
\newblock Irish Math. Soc. Bulletin 61 (2008), pp. 29--63.

\bibitem{Robert12}
L.~Robert: Classification of inductive limits of 1-dimensional {NCCW}
  complexes.
\newblock Adv. Math. 231 (2012), no.~5, pp. 2802--2836.

\bibitem{Rordam}
M.~R{{\o}}rdam: Classification of {N}uclear \cstar-{A}lgebras.
\newblock {E}ncyclopaedia of {M}athematical {S}ciences. Springer (2001).

\bibitem{Rordam04}
M.~R{\o}rdam: The stable and the real rank of {$\CZ$}-absorbing
  \cstar-algebras.
\newblock Int. J. Math. 15 (2004), pp. 1065--1084.

\bibitem{RosenbergSchochet87}
J.~Rosenberg, C.~Schochet: The {K}{\"u}nneth {T}heorem and the {U}niversal
  {C}oefficient {T}heorem for {K}asparov's generalized {$K$}-functor.
\newblock Duke Math. J. 55 (1987), no.~2, pp. 431--474.

\bibitem{Santiago15}
L.~Santiago: Crossed products by actions of finite groups with the {R}okhlin
  property.
\newblock Int. J. Math. 26 (2015).
\newblock 31 pages.

\bibitem{SatoWhiteWinter15}
Y.~Sato, S.~White, W.~Winter: Nuclear dimension and {$\CZ$}-stability.
\newblock Invent. Math. 202 (2015), pp. 893--921.

\bibitem{Skandalis88}
G.~Skandalis: {Une notion de nucl\'earit{\'e} en {$K$}-th{\'e}orie (d'apr{\`e}s
  J. Cuntz)}.
\newblock K-Theory 1 (1988), no.~6, pp. 549--573.

\bibitem{Spielberg07_2}
J.~Spielberg: Graph-based models for {K}irchberg algebras.
\newblock J. Operator Th. 57 (2007), pp. 347--374.

\bibitem{Takai75}
H.~Takai: {On a duality for crossed products by {$\mathrm{C}^*$}-algebras}.
\newblock J. Funct. Anal. 19 (1975), pp. 25--39.

\bibitem{TikuisisWhiteWinter17}
A.~Tikuisis, S.~White, W.~Winter: Quasidiagonality of nuclear \cstar-algebras.
\newblock Ann. Math. 185 (2017), pp. 229--284.

\bibitem{Yeend06}
T.~Yeend: Topological higher-rank graphs and the \cstar-algebras of topological
  1-graphs.
\newblock Contemp. Math. 414 (2006), pp. 231--244.

\bibitem{Yeend07}
T.~Yeend: Groupoid models for the \cstar-algebras of topological higher-rank
  graphs.
\newblock J. Operator Theory 57 (2007), pp. 95--120.

\end{thebibliography}
\end{document}